
\NeedsTeXFormat{LaTeX2e}

\documentclass[11pt,reqno]{amsproc}
 \usepackage{hyperref}
\parskip=1mm
\oddsidemargin=1cm \evensidemargin=1cm \textwidth=14.5cm
\textheight=22cm \headheight=1cm \topskip=1cm \topmargin=-0.5cm
\newtheorem{theorem}{\bf{Theorem}}[section] 
\newtheorem{lemma}[theorem]{\bf{Lemma}}     

\newtheorem{corollary}[theorem]{\bf{Corollary}}

\newtheorem{definition}[theorem]{\bf{Definition}}




\title[Characterization of Symmetric Amenability of Unital Banach Algebras]
{Characterization of Symmetric Amenability of Unital Banach Algebras} 
\author{Ali Jabbari}
\address{Young Researchers and Elite Club, Ardabil, Iran}
\email{jabbari\underline{ }al@yahoo.com}
\author[E. Ebadian]{Ali Ebadian}
\address{
Faculty of Science, Department of Mathematics, Urmia University, Urmia, Iran}
\email{ebadian.ali@gmail.com}

\subjclass[2010]{Primary 46H05, Secondary  43A10}
\keywords{amenability, Banach algebra, symmetrically amenable}

\begin{document}
\maketitle

\begin{abstract}
In this paper, we introduce $p$-amenability,  bounded $s$-symmetric approximate and $s$-symmetric virtual diagonals for a Banach algebra $\mathfrak{A}$ where $s$ is a non-zero element of algebraic center of $\mathfrak{A}$ that is denoted  by $Z(\mathfrak{A})$. We show that if a Banach algebra $\mathfrak{A}$ is $p$-amenable then it has bounded $s$-symmetric approximate and $s$-symmetric virtual diagonals and by this fact we prove that if the Banach algebra $\mathfrak{A}$ is unital then $p$-amenability and symmetric amenability are equivalent.
\end{abstract}


\section{Introduction} 
Let $\mathfrak{A}$ be a Banach algebra and let $X$ be a Banach $\mathfrak{A}$-bimodule. A derivation is a linear map $D:\mathfrak{A}\longrightarrow X$ such that $D(ab)=a\cdot D(b)+a\cdot D(b)$ for every $a,b\in \mathfrak{A}$. A derivation $D:\mathfrak{A}\longrightarrow X$ is called inner if there is a $x\in X$ such that $D(a)=a\cdot x-x\cdot a$ for every $a\in \mathfrak{A}$. Note that, if $X$ is a Banach $\mathfrak{A}$-bimodule, then $X^*$ becomes a  Banach $\mathfrak{A}$-bimodule by the following actions
$$\langle x,a\cdot f\rangle=\langle x\cdot a, f\rangle,\ \ \ \  \langle x,f\cdot a\rangle=\langle a\cdot x,f\rangle,$$
for every $f\in X^*$, $x\in X$ and $a\in \mathfrak{A}$.

A Banach algebra $\mathfrak{A}$ is amenable if every bonded derivation from $\mathfrak{A}$ into dual of any Banach $\mathfrak{A}$-bimodule $X^*$ is inner.

Let $\mathfrak{A}$ be a Banach algebra, Johnson has shown that $\mathfrak{A}$ is amenable if and only if it has a bounded approximate diagonal \cite{joh}; that is a bounded net $(m_\alpha)_\alpha$ in $\mathfrak{A}\widehat{\otimes}\mathfrak{A}$ with
\begin{enumerate}
  \item $a\cdot m_\alpha-m_\alpha\cdot a\longrightarrow0$,
  \item $a\cdot\pi(m_\alpha)\longrightarrow a$,
\end{enumerate}
for every $a\in\mathfrak{A}$, where $\pi:\mathfrak{A}\widehat{\otimes}\mathfrak{A}\longrightarrow\mathfrak{A}$ is defined by $\pi(a\otimes b)=ab$ for every $a,b\in\mathfrak{A}$. An element $t\in\mathfrak{A}\widehat{\otimes}\mathfrak{A}$ is called symmetric if $t^\circ=t$, where  $``\circ"$ is the flip map is defined by $(a\otimes b)^\circ =b\otimes a$. Johnson in \cite{joh1} is introduced symmetric amenability of Banach algebras. He called a Banach algebra $\mathfrak{A}$ is symmetrically amenable if it has a bounded approximate identity consisting of symmetric elements. Consider the opposite algebra $\mathfrak{A}^\circ$ that is the Banach space $\mathfrak{A}$ with product $a\circ b=ba$. A bounded approximate diagonal  in $\mathfrak{A}\widehat{\otimes}\mathfrak{A}$ for $\mathfrak{A}^\circ$ is a bounded net $(m_\alpha)_\alpha$  in $\mathfrak{A}\widehat{\otimes}\mathfrak{A}$ if
\begin{enumerate}
  \item[(1)$^\circ$] $a\circ m_\alpha-m_\alpha\circ a\longrightarrow0$,
   \item[(2)$^\circ$] $a\circ\pi^\circ(m_\alpha)\longrightarrow a$,
\end{enumerate}
for every $a\in\mathfrak{A}$. The Banach algebra $\mathfrak{A}$ is symmetrically amenable if and only if there is a bounded net  $(m_\alpha)_\alpha$ in $\mathfrak{A}\widehat{\otimes}\mathfrak{A}$ such that satisfies (1), (2), (1)$^\circ$ and (2)$^\circ$ \cite[Proposition 2.2]{joh1}. For unital Banach algebras, if the conditions (1), (2),  and (2)$^\circ$ hold, then it is symmetrically amenable \cite[Proposition 2.6]{joh1}.

In the next section, we introduce two new concepts related to inner derivations on Banach algebras that we call them $p$ and $p^\circ$-inner derivations. By these new defined derivations we introduce $p$-amenability of Banach algebras.

The final section considers symmetrically amenable unital Banach algebras. In this section, we prove that symmetric amenability and $p$-amenability of Banach algebras are equivalent. Moreover, we characterize symmetric amenability of Banach algebras in the sense of Lau's paper \cite{lau}. Note that symmetric amenability of Banach algebras is investigated by the existence of symmetric approximate diagonal and in the section 3, we consider it with derivations.
\section{$p$-Amenability}
In this section, we consider derivations from a Banach algebra into tensor product of its and a Banach $\mathfrak{A}$-bimodule. Let $\mathfrak{A}$ be a Banach algebra and $X$ be a Banach $\mathfrak{A}$-bimodule  for $n\in\mathbb{N}$ and by $\mathfrak{A}^{(n)}$ and $X^{(n)}$, we mean the $n$-th duals of $\mathfrak{A}$ and $X$, respectively. We define $p:\mathfrak{A}\widehat{\otimes}X^{*}\longrightarrow X^*$ such that $p({a}\otimes\mathfrak{x})={a}\cdot\mathfrak{x}$ and $p^\circ({a}\otimes\mathfrak{x})=\mathfrak{x}\cdot a$ for every $\mathfrak{a}\in\mathfrak{A}$ and $\mathfrak{x}\in X^{*}$. Now; we define $p$-inner derivations and $p$-amenability as follows:
\begin{definition}\label{d1}
Let $\mathfrak{A}$ be a Banach algebra and $X$ be a Banach $\mathfrak{A}$-bimodule. We say that a derivation $D:\mathfrak{A}\longrightarrow X^*$ is $p$-inner, if there exists an element  $t\in \mathfrak{A}\widehat{\otimes} X^*$ such that
\begin{equation*}
  D(a)=a\cdot p(t)-p(t)\cdot a,
\end{equation*}
for all $a\in \mathfrak{A}$. Similarly,  a derivation $D:\mathfrak{A}^\circ\longrightarrow X^*$ is $p^\circ$-inner, if there exists an element  $t'\in \mathfrak{A}\otimes X^*$ such that
\begin{equation*}
  D(a)=a\cdot p^\circ(t)-p^\circ(t)\cdot a,
\end{equation*}
for all $a\in \mathfrak{A}$.
\end{definition}

\begin{definition}
  Let $\mathfrak{A}$ be a Banach algebra and $X$ be a Banach $\mathfrak{A}$-bimodule. We say that $\mathfrak{A}$ is $p$-amenable if every derivation from $\mathfrak{A}$ into $X^*$  and every derivation from $\mathfrak{A}^\circ$ into $X^*$ are $p$ and $p^\circ$-inner, respectively.
\end{definition}

In light of Proposition 2.1.5 of \cite{runde} we have the following:
\begin{lemma}\label{l1}
 Let $\mathfrak{A}$ be a Banach algebra with a bounded approximate identity. Then the following assertions are equivalent:
 \begin{itemize}
   \item[(i)] Every derivation from $\mathfrak{A}\ (\mathfrak{A}^\circ)$   into $X^*$ is $p$-inner ($p^\circ$-inner), where $X$ is an arbitrary  Banach $\mathfrak{A}$-bimodule.
   \item[(ii)] Every derivation from $\mathfrak{A}\ (\mathfrak{A}^\circ)$ into $X^*$ is $p$-inner ($p^\circ$-inner), where $X$ is an arbitrary  pseudo-unital Banach $\mathfrak{A}$-bimodule.
 \end{itemize}
\end{lemma}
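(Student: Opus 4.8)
The implication (ii)$\Rightarrow$(i) is the one carrying content; (i)$\Rightarrow$(ii) is immediate, since every pseudo-unital Banach $\mathfrak{A}$-bimodule is in particular a Banach $\mathfrak{A}$-bimodule, so the hypothesis in (i) already covers the modules occurring in (ii). My plan for (ii)$\Rightarrow$(i) is to run the reduction-to-pseudo-unital argument behind Proposition 2.1.5 of \cite{runde}, the only extra bookkeeping being to keep every implementing element inside $\operatorname{ran}p=\mathfrak{A}\cdot X^{*}$ (respectively $\operatorname{ran}p^{\circ}=X^{*}\cdot\mathfrak{A}$), so that the classical conclusion ``inner'' is upgraded to ``$p$-inner''.

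Starting from an arbitrary Banach $\mathfrak{A}$-bimodule $X$ and a derivation $D\colon\mathfrak{A}\to X^{*}$, I would first manufacture a pseudo-unital module from $X$. Since $\mathfrak{A}$ has a bounded approximate identity $(e_{\alpha})_{\alpha}$, Cohen's factorization theorem (applied once on each side) shows that $X_{0}:=\mathfrak{A}\cdot X\cdot\mathfrak{A}$ is a norm-closed, pseudo-unital sub-bimodule of $X$. The inclusion $X_{0}\hookrightarrow X$ is a bimodule map, hence its adjoint $R\colon X^{*}\to X_{0}^{*}$ (restriction of functionals) is a surjective $\mathfrak{A}$-bimodule morphism, and therefore $R\circ D\colon\mathfrak{A}\to X_{0}^{*}$ is a derivation into the dual of a pseudo-unital module. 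Hypothesis (ii) then yields $t_{0}\in\mathfrak{A}\widehat{\otimes}X_{0}^{*}$ with $R(D(a))=a\cdot p(t_{0})-p(t_{0})\cdot a$ for all $a\in\mathfrak{A}$.

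The next step pulls this implementing element back to $X^{*}$ without leaving $\operatorname{ran}p$. By Cohen's theorem I may write $p(t_{0})=a_{0}\cdot g_{0}$ with $a_{0}\in\mathfrak{A}$, $g_{0}\in X_{0}^{*}$; choosing $G_{0}\in X^{*}$ with $RG_{0}=g_{0}$ (possible as $R$ is onto) and putting $\Phi:=a_{0}\cdot G_{0}=p(a_{0}\otimes G_{0})$ gives an element of $\operatorname{ran}p$ with $R\Phi=a_{0}\cdot RG_{0}=p(t_{0})$, because $R$ is a module map. Thus $\operatorname{ad}_{\Phi}\colon a\mapsto a\cdot\Phi-\Phi\cdot a$ is $p$-inner, and $D_{1}:=D-\operatorname{ad}_{\Phi}$ is a derivation with range in $\ker R=X_{0}^{\perp}=\{f\in X^{*}:\mathfrak{A}\cdot f\cdot\mathfrak{A}=0\}$. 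Since $p$ is linear and $\operatorname{ran}p$ is a subspace, a sum of $p$-inner derivations is $p$-inner, so it remains only to show that this remainder $D_{1}$ is $p$-inner.

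This last point is where I expect the real difficulty to lie, and it is precisely the analogue of the corresponding passage in \cite[Proposition 2.1.5]{runde}. The tool is the bounded approximate identity fed through the derivation identity $D_{1}(e_{\alpha}ae_{\beta})=e_{\alpha}a\cdot D_{1}(e_{\beta})+e_{\alpha}\cdot D_{1}(a)\cdot e_{\beta}+D_{1}(e_{\alpha})\cdot ae_{\beta}$, in which the middle term vanishes identically because $D_{1}(a)\in X_{0}^{\perp}$ forces $\mathfrak{A}\cdot D_{1}(a)\cdot\mathfrak{A}=0$. Passing to a weak$^{*}$-accumulation point $\Xi$ of the bounded net $(D_{1}(e_{\alpha}))_{\alpha}$, which exists by Banach--Alaoglu and lies in the weak$^{*}$-closed annihilator $X_{0}^{\perp}$, should then let me recover $D_{1}$ from $\Xi$ and the module actions; the genuinely delicate matters, to be handled exactly as in \cite{runde}, are the interchange of the two one-sided weak$^{*}$ limits and the verification that the implementing element so produced can be arranged to lie in $\operatorname{ran}p$. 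The parenthetical statements about $\mathfrak{A}^{\circ}$ and $p^{\circ}$-inner derivations require no separate proof: interchanging the two module actions throughout and replacing $p$ by $p^{\circ}$ turns the argument above into its mirror image.
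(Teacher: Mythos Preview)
The paper gives no proof of this lemma at all: it merely prefaces the statement with ``In light of Proposition 2.1.5 of \cite{runde} we have the following'' and moves on. Your sketch is therefore not competing against a written argument but against a bare citation, and in spirit it is the same approach---run Runde's pseudo-unital reduction and check that nothing breaks when ``inner'' is replaced by ``$p$-inner''.

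One caution about your final paragraph. You defer both the limit-interchange and the verification that the implementing element for $D_{1}$ lands in $\operatorname{ran}p$ to ``exactly as in \cite{runde}''. The first deferral is legitimate; the second is not, because Runde only produces an implementing element in $X^{*}$, with no control over whether it lies in $\mathfrak{A}\cdot X^{*}$. Concretely, on the piece of $X_{0}^{\perp}$ where the left $\mathfrak{A}$-action is trivial (dual to the situation $\mathfrak{A}\cdot Y=Y$, $Y\cdot\mathfrak{A}=0$), the weak$^{*}$ accumulation point $\Xi$ of $(D_{1}(e_{\alpha}))$ implements $D_{1}$ via $D_{1}(a)=-\Xi\cdot a$, yet $\operatorname{ran}p=\mathfrak{A}\cdot(\,\cdot\,)=\{0\}$ there; so you must argue separately that either $D_{1}$ vanishes on such pieces or that the implementing element can be shifted by a central element into $\operatorname{ran}p$. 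This is the one genuinely new point beyond Runde, and it should be made explicit rather than absorbed into the citation. Since the paper itself supplies no argument, this gap is not something the paper resolves either---but your write-up should not claim Runde covers it.
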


Similar to amenable Banach algebras, we have the following result:
\begin{lemma}\label{p1}
  Every  $p$-amenable Banach algebra has a bounded approximate identity.
\end{lemma}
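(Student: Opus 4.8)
The plan is to mimic the classical argument that an amenable Banach algebra carries a bounded approximate identity (see \cite{runde}); the point that makes the adaptation painless is that a $p$-inner derivation $D(a)=a\cdot p(t)-p(t)\cdot a$ is, in particular, an ordinary inner derivation implemented by the single functional $p(t)\in X^{*}$, so the membership of the implementing element in the range of $p$ never interferes once that element has been produced.

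To construct a bounded right approximate identity, I would regard $\mathfrak{A}$ as a Banach $\mathfrak{A}$-bimodule $X_{0}$ whose underlying space is $\mathfrak{A}$, with left action $a\cdot x=ax$ and trivial right action $x\cdot a=0$. Writing $X=X_{0}^{*}$, so that $X^{*}=\mathfrak{A}^{**}$, the canonical embedding $\kappa:\mathfrak{A}\longrightarrow\mathfrak{A}^{**}$ becomes a derivation into the dual module $X^{*}$: computing the induced actions on the second dual gives $a\cdot\kappa(b)=\kappa(ab)$ while $\kappa(a)\cdot b=0$, so the Leibniz rule holds. Since $\mathfrak{A}$ is $p$-amenable, $\kappa$ is $p$-inner, producing $\Phi=p(t)\in\mathfrak{A}^{**}$ with $\kappa(a)=a\cdot\Phi-\Phi\cdot a$; and because the right action induced on $X^{*}$ is again trivial this collapses to $\kappa(a)=a\cdot\Phi$.

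Next I would read this identity off against functionals: it amounts to $\langle f\cdot a,\Phi\rangle=\langle a,f\rangle$ for all $a\in\mathfrak{A}$ and $f\in\mathfrak{A}^{*}$. Choosing, by Goldstine's theorem, a bounded net $(e_{\alpha})_{\alpha}$ in $\mathfrak{A}$ with $\kappa(e_{\alpha})\longrightarrow\Phi$ in the weak$^{*}$ topology, one obtains $\langle ae_{\alpha},f\rangle\longrightarrow\langle a,f\rangle$, i.e. $ae_{\alpha}\longrightarrow a$ weakly for every $a$. A routine Mazur-type convexity argument, in which $0$ lies in the weak closure and hence in the norm-closed convex hull of the appropriate net of differences, then promotes $(e_{\alpha})_{\alpha}$ to a genuine bounded right approximate identity. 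Running the same construction with the two actions interchanged (trivial left action, $x\cdot a=xa$), now invoking a derivation out of $\mathfrak{A}^{\circ}$ and its $p^{\circ}$-inner implementation, yields a bounded left approximate identity; since a Banach algebra possessing both a bounded left and a bounded right approximate identity possesses a two-sided one, this completes the argument.

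I expect the only genuinely delicate points to be bookkeeping, namely verifying that the actions induced on $\mathfrak{A}^{**}$ are the trivial ones (so that the inner derivation collapses to $a\cdot\Phi$) and carrying out the weak-to-norm passage together with the left/right combination, rather than anything special to $p$-amenability. Indeed, one could bypass the explicit construction entirely by noting at the outset that every $p$-inner derivation is inner, so that $p$-amenability implies amenability and the conclusion follows from \cite{runde} at once.
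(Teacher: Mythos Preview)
Your argument is correct and follows essentially the same route as the paper's own proof: the same one-sided module $X_{0}$ with trivial right action, the canonical embedding as a derivation into $\mathfrak{A}^{**}$, the collapse $\kappa(a)=a\cdot p(t)$ from triviality of the right action, Goldstine plus convex combinations to produce a bounded right approximate identity, the mirror construction for the left side, and then the standard combination (the paper writes it explicitly as $e_{\alpha}+f_{\beta}-e_{\alpha}f_{\beta}$). Your closing observation that $p$-inner trivially implies inner, so that one could simply invoke amenability and \cite{runde} directly, is a valid shortcut that the paper does not record.
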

\begin{proof}
The proof is similar to the proof of Proposition 2.2.1 of \cite{runde}. Let $\mathfrak{B}$ be the Banach $\mathfrak{A}$-bimodule with underlying space $\mathfrak{A}$ with the following module action:
 $$a\cdot x=ax,\hspace{1cm}x\cdot a=0,$$
 and
  $$a\circ x=0,\hspace{1cm}x\circ a=ax,$$
 for every $a\in\mathfrak{A}$ and $x\in \mathfrak{B}$.
  We work on $\mathfrak{A}$ and for the case $\mathfrak{A}^\circ$ proof is similar. Let $D:\mathfrak{A}\longrightarrow\mathfrak{B}^{**}$ be the canonical embedding of $\mathfrak{A}$ into it's second dual. Clearly, $D\in \mathcal{Z}^1(\mathfrak{A},\mathfrak{B}^{**})$. Thus, there is a $t\in \mathfrak{A}\otimes\mathfrak{B}^{**}$ such that $a=a\cdot p(t)$ for all $a\in\mathfrak{A}$. Thus, there exists a bounded net $(e_\alpha)$ in $\mathfrak{A}$ such that $e_\alpha\stackrel{w^*}{\longrightarrow}p(t)$. This follows that $a=w-\lim_\alpha ae_\alpha$ for all $a\in\mathfrak{A}$. Passing to convex combinations, implies that $(e_\alpha)$ is  a bounded right approximate identity for $\mathfrak{A}$ and a bounded left approximate identity for $\mathfrak{A}^\circ$.

 Similarly, one can show that there is a bounded left approximate identity and bounded right approximate identity $(f_\beta)$ for  $\mathfrak{A}$ and $\mathfrak{A}^\circ$, respectively. Now; by setting $E_{\alpha,\beta}=e_\alpha+f_\beta-e_\alpha f_\beta$, we obtain a bounded  approximate identity  for  $\mathfrak{A}$ and $\mathfrak{A}^\circ$.
\end{proof}

\section{Characterization of Symmetric Amenability}

In this section, we give our main results related to the symmetric amenability of Banach algebras. In the other word, we characterize symmetric amenability similar to amenability, by derivations. We start off with the following definitions:
\begin{definition}\label{def1}
  Let $\mathfrak{A}$ be a Banach algebra with $0\neq s\in Z(\mathfrak{A})$.
  \begin{itemize}
    \item[(i)]  We call an element $M\in \mathfrak{A}^{**}\widehat{\otimes}\mathfrak{A}^{**}$  a $s$-symmetric virtual diagonal for $\mathfrak{A}$ if for every $a\in \mathfrak{A}$, we have
  \begin{enumerate}
    \item $a\cdot M=M\cdot a$ and $a\cdot \pi^{**}(M)=as^2$,
    \item  $a\circ M=M\circ a$ and $a\circ \pi^{\circ**}(M)=as^2$.
  \end{enumerate}
    \item[(ii)]  We call a bounded net $(m_\alpha)_\alpha$ in $\mathfrak{A}\widehat{\otimes}\mathfrak{A}$ a $s$-symmetric approximate diagonal for $\mathfrak{A}$ if for every $a\in \mathfrak{A}$, we have
  \begin{enumerate}
    \item $a\cdot m_\alpha- m_\alpha\cdot a\longrightarrow0$ and $a\cdot \pi(m_\alpha)\longrightarrow as^2$,
    \item  $a\circ m_\alpha-m_\alpha\circ a\longrightarrow0$ and $a\circ \pi^\circ(m_\alpha)\longrightarrow as^2$.
  \end{enumerate}
   \item[(iii)] We say that the Banach algebra $\mathfrak{A}$ is $s$-symmetrically amenable if it has a bounded $s$-symmetric approximate diagonal.
  \end{itemize}
\end{definition}

The following result is very important for characterization of symmetric amenability.
\begin{theorem}\label{t1}
  Let $\mathfrak{A}$ be Banach algebra with $0\neq s\in Z(\mathfrak{A})$, if $\mathfrak{A}$ is $p$-amenable, then the following assertions hold and are equivalent:
  \begin{itemize}
    \item[(i)]  $\mathfrak{A}$ has a $s$-symmetric virtual diagonal;
    \item[(ii)] $\mathfrak{A}$ is $s$-symmetrically amenable.
  \end{itemize}
\end{theorem}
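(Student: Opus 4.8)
The plan is to treat the equivalence (i)$\Leftrightarrow$(ii) by the classical passage between virtual and approximate diagonals, and to derive (i) itself from $p$-amenability by a Johnson-type correction argument adapted to the central element $s$. Throughout I will use that a $p$-amenable algebra carries a bounded approximate identity $(e_\alpha)_\alpha$ (Lemma \ref{p1}), and that, by the very definition of $p$-amenability, both the $p$-part (for $\mathfrak{A}$) and the $p^\circ$-part (for $\mathfrak{A}^\circ$) are available.

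For (ii)$\Rightarrow$(i), given a bounded $s$-symmetric approximate diagonal $(m_\alpha)_\alpha$ I would take a weak$^*$ cluster point $M$ of this bounded net in the bidual, regarded in $\mathfrak{A}^{**}\widehat{\otimes}\mathfrak{A}^{**}$, and push the four defining limits through the weak$^*$-continuous maps, namely the module actions, $\pi^{**}$, $\pi^{\circ**}$ and the bidual of the flip. From $a\cdot m_\alpha-m_\alpha\cdot a\to 0$ and $a\cdot\pi(m_\alpha)\to as^2$ one gets $a\cdot M=M\cdot a$ and $a\cdot\pi^{**}(M)=as^2$, and similarly for the $\circ$-conditions, so $M$ is an $s$-symmetric virtual diagonal. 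For (i)$\Rightarrow$(ii), starting from a virtual diagonal $M$, Goldstine's theorem supplies a bounded net in $\mathfrak{A}\widehat{\otimes}\mathfrak{A}$ converging weak$^*$ to $M$; the four identities for $M$ become weak-limit statements, and a standard convex-combination (Mazur) argument over finite sets of elements $a$ upgrades them to the norm limits demanded in Definition \ref{def1}(ii). The boundedness and the normalizing $\pi$-conditions here use the bounded approximate identity.

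The core is $p$-amenability $\Rightarrow$ (i). I would form the symmetric elements $u_\alpha=se_\alpha\otimes se_\alpha$, which satisfy $u_\alpha^\circ=u_\alpha$ and $\pi(u_\alpha)=s^2e_\alpha^2$, and take a weak$^*$ cluster point $E$ in the bidual. Since $s\in Z(\mathfrak{A})$ and $(e_\alpha)_\alpha$ is an approximate identity, $E$ is symmetric and $a\cdot\pi^{**}(E)=as^2$, $a\circ\pi^{\circ**}(E)=as^2$. The maps $D(a)=a\cdot E-E\cdot a$ and $D^\circ(a)=a\circ E-E\circ a$ are bounded derivations from $\mathfrak{A}$ and $\mathfrak{A}^\circ$ into the weak$^*$-closed, hence dual, submodules $\ker\pi^{**}$ and $\ker\pi^{\circ**}$. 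The $p$- and $p^\circ$-parts of $p$-amenability give $D(a)=a\cdot N-N\cdot a$ and $D^\circ(a)=a\circ N'-N'\circ a$ with $N\in\ker\pi^{**}$, $N'\in\ker\pi^{\circ**}$ of the form $p(t)$, $p^\circ(t')$. Setting $M=E-N$ forces $a\cdot M=M\cdot a$, while $N\in\ker\pi^{**}$ keeps $\pi^{**}(M)=\pi^{**}(E)$, so conditions (1) of Definition \ref{def1}(i) hold.

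The main obstacle is passing from conditions (1) to conditions (2) for a \emph{single} element $M$. My key tool is the pair of flip identities
\begin{equation*}
(a\cdot m-m\cdot a)^\circ=-(a\circ m^\circ-m^\circ\circ a),\qquad \pi^\circ(m)=\pi(m^\circ),
\end{equation*}
valid for $m\in\mathfrak{A}\widehat{\otimes}\mathfrak{A}$ and inherited by the biduals. These show that, for a symmetric element, the $\cdot$-commutator condition is equivalent to the $\circ$-commutator condition and $\pi^{\circ**}=\pi^{**}$ holds; together with $s\in Z(\mathfrak{A})$ this makes conditions (2) a consequence of conditions (1). Hence everything reduces to arranging the correcting element $N$ to be \emph{symmetric}, so that $M=E-N$ is symmetric. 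This is the delicate step: the element produced by $p$-innerness need not satisfy $N^\circ=N$, and the naive symmetrization $\tfrac12(N+N^\circ)$ a priori destroys the $\cdot$-derivation identity. I would resolve it by observing that $N^\circ$ solves the $\circ$-problem (this is the first flip identity applied to $D$, using that $E$ is symmetric), so $N$ and $N'$ may be taken flip-related, and then reconciling the two corrections into one symmetric $M$ via the centrality of $s$; verifying that this symmetric $M$ retains both commutator identities and both $s^2$-normalizations is where the real work lies. The equivalence of the first step then transfers $M$ to a bounded $s$-symmetric approximate diagonal, giving (ii).
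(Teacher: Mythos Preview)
Your treatment of the equivalence (i)$\Leftrightarrow$(ii) via weak$^*$-cluster points in one direction and Goldstine plus Mazur in the other is standard and matches the paper (modulo a labelling slip in the paper). The overall architecture of the main implication --- build an element $E$ from an approximate identity, observe that $\delta_E$ and $\delta_E^\circ$ are derivations, and use $p$-amenability to correct --- is also the paper's. Where you diverge, and where your proposal has a genuine gap, is in the correction step.

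You place $s$ in $E$ from the outset (via $se_\alpha\otimes se_\alpha$) and then try to force the correction $N$ to be flip-symmetric, so that the flip identities transfer conditions (1) to conditions (2). You correctly identify that the naive symmetrisation $\tfrac12(N+N^\circ)$ wrecks the $\cdot$-derivation identity, and your proposed fix (``reconciling the two corrections via the centrality of $s$'') is left unspecified. This is a real obstacle: there is no evident mechanism forcing a $p$-inner implementing element to be symmetric, and your argument does not supply one.

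The paper sidesteps this entirely. It works with $E=e\otimes e$ (no $s$ yet) and makes the key observation you are missing: \emph{both} $\delta_E$ and $\delta_E^\circ$ take values in the \emph{intersection} $K=(\ker\pi)^{**}\cap(\ker\pi^\circ)^{**}$, not merely in the separate kernels. Since $K$ is itself a dual Banach $\mathfrak{A}$-bimodule, $p$-amenability yields $p(t),\,p^\circ(t')\in K$ with $\delta_E=\delta_{p(t)}$ and $\delta_E^\circ=\delta_{p^\circ(t')}$. Because each correction lies in $K$, neither disturbs $\pi^{**}$ \emph{or} $\pi^{\circ**}$. The paper then introduces $s$ only at the end, setting
\[
M=\tfrac12\bigl[s\cdot(E-p(t))\cdot s+s\circ(E-p^\circ(t'))\circ s\bigr],
\]
and asserts that this average satisfies both commutator identities and both $s^2$-normalisations. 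The point is that the paper never attempts to produce a flip-symmetric $M$; it instead arranges the two corrected pieces so that each already lies in both kernels, and lets the $s$-sandwiching and averaging handle the commutators. Your route could perhaps be completed, but as written it stops exactly where the difficulty begins; the paper's intersection-of-kernels device is the missing idea.
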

\begin{proof}
(i)$\longrightarrow$(ii) Assume that $\mathfrak{A}$ is $p$-amenable. Then by Lemma \ref{p1}, there is a bounded approximate identity $(e_\alpha)$ for $\mathfrak{A}$ such that $e_\alpha\stackrel{w^*}{\longrightarrow}e\in \mathfrak{A}^{**}$. Denote $e\otimes e$ by $E$ that is in $\mathfrak{A}^{**}\widehat{\otimes}\mathfrak{A}^{**}\subseteq(\mathfrak{A}\widehat{\otimes}\mathfrak{A})^{**}$. Now; consider $\pi^{**}:(\mathfrak{A}\widehat{\otimes}\mathfrak{A})^{**}\longrightarrow \mathfrak{A}^{**}$ and $\pi^{\circ **}:(\mathfrak{A}\widehat{\otimes}\mathfrak{A})^{**}\longrightarrow \mathfrak{A}^{**}$. Then
\begin{equation}\label{e1t1}
\pi^{**}(\delta_E(a))= \pi^{**}(a\cdot E-E\cdot a)=w^*-\lim_\alpha\pi(ae_\alpha\otimes e_\alpha-e_\alpha\otimes ae_\alpha)=0,
\end{equation}
and
\begin{equation}\label{e2t1}
\pi^{\circ **}(\delta_E^\circ(a))=  \pi^{\circ **}(a\circ E-E\circ a)=w^*-\lim_\alpha\pi^\circ (e_\alpha\otimes ae_\alpha-e_\alpha a\otimes e_\alpha)=0,
\end{equation}
for every $a\in \mathfrak{A}$. Then (\ref{e1t1}) implies that $\delta_E\subseteq \ker\pi^{**}=(\ker\pi)^{**}$. Moreover, if we replace $\pi$ by $\pi^\circ$ in \eqref{e1t1}, we have $\delta_E(a)\subseteq \ker\pi^{\circ**}=(\ker\pi^\circ)^{**}$. Also, (\ref{e2t1}) shows that $\delta_E^\circ\subseteq \ker\pi^{\circ**}=(\ker\pi^\circ)^{**}$ and similarly, $\delta_E^\circ\subseteq\ker\pi^{**}=(\ker\pi)^{**}$. These facts together imply that $\ker\pi^{**}\cap\ker\pi^{\circ**}\neq\emptyset$. Since  $(\ker\pi)^{**}$ and $(\ker\pi)^{\circ**}$ are Banach $\mathfrak{A}$-bimodules, $(\ker\pi)^{**}\cap(\ker\pi)^{\circ**}=K$ is a Banach $\mathfrak{A}$-bimodule. Thus, there exist $t,t'\in \mathfrak{A}\otimes K$ such that $\delta_E=\delta_{p(t)}$ and $\delta_E^\circ=\delta_{p^\circ(t')}$, where $p:\mathfrak{A}\otimes K\longrightarrow K$.  Define
$$M=\frac{1}{2}[s\cdot(E-p(t))\cdot s+s\circ(E-p^\circ(t'))\circ s].$$

Then $M$ satisfies in conditions (1) and (2) in case (i) of Definition \ref{def1}.

(ii)$\longrightarrow$(i) Suppose that $M\in \mathfrak{A}^{**}\widehat{\otimes}\mathfrak{A}^{**}$ is a $s$-symmetric virtual diagonal for $\mathfrak{A}$, then clearly,  there is a net $(m_\alpha)_\alpha$ in $\mathfrak{A}\widehat{\otimes}\mathfrak{A}$ such that  $M=w^*-\lim m_\alpha$. It easy to see that $(m_\alpha)_\alpha$ is a $s$-symmetric approximate diagonal for $\mathfrak{A}$.
\end{proof}
\begin{theorem}\label{tt1}
  Let $\mathfrak{A}$ be a unital Banach algebra, then the following assertions equivalent:
  \begin{itemize}
    \item[(i)]  $\mathfrak{A}$ is $p$-amenable
    \item[(ii)] $\mathfrak{A}$ has a symmetric virtual diagonal;
    \item[(iii)] $\mathfrak{A}$ is symmetrically amenable.
  \end{itemize}
\end{theorem}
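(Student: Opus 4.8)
The plan is to run everything with the choice $s=1$: since $\mathfrak{A}$ is unital, its unit lies in $Z(\mathfrak{A})$ and is non-zero, so Theorem \ref{t1} is available with this $s$. With $s=1$ one has $s^2=1$, so the defining relations $a\cdot\pi^{**}(M)=as^2$ and $a\cdot\pi(m_\alpha)\to as^2$ of Definition \ref{def1} collapse to $a\cdot\pi^{**}(M)=a$ and $a\cdot\pi(m_\alpha)\to a$; hence a $1$-symmetric virtual diagonal is exactly a symmetric virtual diagonal, and a bounded $1$-symmetric approximate diagonal is exactly a bounded net satisfying Johnson's conditions $(1)$, $(2)$, $(1)^\circ$, $(2)^\circ$, that is, a witness of symmetric amenability. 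Granting this dictionary, Theorem \ref{t1} (with $s=1$) supplies two of the three implications at once: $p$-amenability produces a symmetric virtual diagonal, giving (i)$\Rightarrow$(ii), while the weak$^*$-density argument in its proof turns that virtual diagonal into a symmetric approximate diagonal, giving (ii)$\Rightarrow$(iii). It therefore remains only to establish (iii)$\Rightarrow$(i).

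For that, I would first note that symmetric amenability forces amenability of both $\mathfrak{A}$ and its opposite $\mathfrak{A}^\circ$. Indeed, a bounded symmetric approximate diagonal $(m_\alpha)_\alpha$ satisfies $(1)$ and $(2)$, so it is a bounded approximate diagonal for $\mathfrak{A}$ in the sense of Johnson and $\mathfrak{A}$ is amenable; at the same time it satisfies $(1)^\circ$ and $(2)^\circ$, so it is a bounded approximate diagonal for $\mathfrak{A}^\circ$ and $\mathfrak{A}^\circ$ is amenable. Consequently every bounded derivation $D:\mathfrak{A}\longrightarrow X^*$ and every bounded derivation $D:\mathfrak{A}^\circ\longrightarrow X^*$ is inner, for every Banach $\mathfrak{A}$-bimodule $X$.

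The remaining and genuinely new step is to promote \emph{inner} to \emph{$p$-inner} (respectively \emph{$p^\circ$-inner}). By Lemma \ref{l1} it suffices to treat pseudo-unital modules, and over a unital algebra a pseudo-unital module $X$ is unital, so that $1\cdot\mathfrak{x}=\mathfrak{x}$ for every $\mathfrak{x}\in X^*$. Writing an inner derivation as $D(a)=a\cdot\mathfrak{x}-\mathfrak{x}\cdot a$ and setting $t=1\otimes\mathfrak{x}\in\mathfrak{A}\widehat{\otimes}X^*$, one obtains $p(t)=1\cdot\mathfrak{x}=\mathfrak{x}$, whence $D(a)=a\cdot p(t)-p(t)\cdot a$ and $D$ is $p$-inner; the same element $t$, read through $p^\circ$, handles the $\mathfrak{A}^\circ$ case. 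This yields (iii)$\Rightarrow$(i) and closes the cycle (i)$\Rightarrow$(ii)$\Rightarrow$(iii)$\Rightarrow$(i). The main obstacle I anticipate is exactly this last promotion: the identity ``$p$-inner $=$ inner'' is not automatic, since $p$ need not be surjective in general, and the argument hinges on reducing to pseudo-unital modules via Lemma \ref{l1} and on using the unit to realise every $\mathfrak{x}\in X^*$ as $p(1\otimes\mathfrak{x})$. Verifying carefully that pseudo-unital modules over a unital algebra are genuinely unital, and that their duals inherit unitality so that $1\cdot\mathfrak{x}=\mathfrak{x}$ on $X^*$, is the delicate point on which the converse direction rests.
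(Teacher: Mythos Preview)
Your proof is correct. The implications (i)$\Rightarrow$(ii)$\Rightarrow$(iii) are handled exactly as in the paper, by specialising Theorem~\ref{t1} to $s=1$. For (iii)$\Rightarrow$(i) you take a different and somewhat cleaner route than the paper. The paper reproduces the standard construction (as in \cite[Theorem~2.2.4]{runde}) of the implementing element as a weak$^*$-accumulation point of $\sum_n a_n^\alpha\cdot D(b_n^\alpha)$ built directly from the symmetric approximate diagonal, and then invokes Cohen factorisation to write that element $\mathfrak{x}$ as $ky=p(k\otimes y)$. You instead observe once and for all that a symmetric approximate diagonal is in particular an approximate diagonal for both $\mathfrak{A}$ and $\mathfrak{A}^\circ$, so both are amenable and every dual-valued derivation is already inner; you then promote inner to $p$-inner (resp.\ $p^\circ$-inner) via the one-line identity $\mathfrak{x}=1\cdot\mathfrak{x}=p(1\otimes\mathfrak{x})$ (resp.\ $\mathfrak{x}=\mathfrak{x}\cdot 1=p^\circ(1\otimes\mathfrak{x})$), after reducing to pseudo-unital modules by Lemma~\ref{l1}. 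This makes transparent that in the unital case the notions of $p$-inner and inner simply coincide, because the unit already supplies the required factorisation; Cohen's theorem, which the paper invokes, is therefore unnecessary here. The paper's version has the minor advantage of exhibiting $t$ explicitly in terms of the diagonal, but that plays no further role.
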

\begin{proof}
The cases (i)$\longrightarrow$(ii)$\longrightarrow$(iii) are proved in Theorem \ref{t1}. Thus, we just prove the case (iii)$\longrightarrow$(i). Assume that $\mathfrak{A}$ is symmetrically amenable, hence, there is a bounded symmetric approximate diagonal $(m_\alpha)_\alpha$ in $\mathfrak{A}\widehat{\otimes}\mathfrak{A}$ such that satisfies in conditions (1), (2), (1)$^\circ$ and (2)$^\circ$. By Lemma 3 of \cite[Chapter VI]{bd}, we can write
$$m_\alpha=\sum_{n=1}^\infty a_n^\alpha\otimes b_n^\alpha,\ \ \sum_{n=1}^\infty\|a_n^\alpha\|\|b_n^\alpha\|<\infty.$$

 Similar to \cite[Theorem 2.2.4]{runde}, let $D:\mathfrak{A}\longrightarrow X^*$ be a continuous derivation, where $X$ is a pseudo-unital Banach $\mathfrak{A}$-bimodule (Lemma \ref{l1}). Therefore, $\sum_{n=1}^\infty p(a_n^\alpha\otimes D(b_n^\alpha))=\sum_{n=1}^\infty a_n^\alpha\cdot D(b_n^\alpha)$ is a bounded net in $X^*$. This implies that it has a $w^*$-accumulation point such as $\mathfrak{x}\in X^*$. Without loss of generality, suppose that  $\mathfrak{x}=w^*-\lim_\alpha \sum_{n=1}^\infty a_n^\alpha\cdot D(b_n^\alpha)$. Since $\mathfrak{A}$ is unital, Cohen factorization Theorem says that there exist $b\in \mathfrak{A}$ and $y\in X^*$ such that $ky=\mathfrak{x}=p(k\otimes y)$. Set $t=k\otimes y$, then
 $$D(a)=a\cdot p(t)-p(t)\cdot a,$$
  for every $a\in \mathfrak{A}$. Similarly, one can show that for every derivation $D:\mathfrak{A}^\circ \longrightarrow X^*$ there is a $t'\in \mathfrak{A}\widehat{\otimes} X^*$ such that  $$D(a)=a\cdot p^\circ(t')-p^\circ(t')\cdot a,$$
  for every $a\in \mathfrak{A}$. Hence, $\mathfrak{A}$ is $p$-amenable.
\end{proof}

Johnson proved that the group algebra $L^1(G)$  defined on a locally compact group $G$ is  amenable if and only if it is symmetrically amenable \cite[Theorem 4.1]{joh1}. Moreover, Blanco, showed that the Banach algebra $\mathcal{A}(X)$ of approximable  operators acting on a Banach space $X$ is amenable if and only if it is symmetrically amenable \cite[Theorem 3.1]{bl}. Now, by the following, we show that symmetric amenability and amenability are equivalent on $M(G)$ the Banach algebra of complex Borel measures on a locally compact group $G$.
\begin{corollary}
Let $G$ be a locally compact group. Then $M(G)$ is symmetrically amenable if and only if $G$ is discrete and amenable.
\end{corollary}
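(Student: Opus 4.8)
The plan is to combine three ingredients: the fact that $M(G)$ is always unital, with identity the point mass $\delta_e$ at the neutral element $e$ of $G$; the elementary observation that symmetric amenability is formally stronger than amenability; and the two external characterisations of amenability for group and measure algebras. I would establish the two implications separately.

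For the implication that ``$G$ discrete and amenable'' forces $M(G)$ to be symmetrically amenable, I would first note that when $G$ is discrete every complex Borel measure is purely atomic and supported on $G$, so that $M(G)$ coincides isometrically with the group algebra $\ell^1(G)=L^1(G)$. Amenability of $G$ then yields amenability of $L^1(G)$ by Johnson's theorem \cite{joh}, and since $L^1(G)$ is amenable if and only if it is symmetrically amenable by \cite[Theorem 4.1]{joh1}, we conclude that $M(G)=L^1(G)$ is symmetrically amenable.

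For the converse, suppose $M(G)$ is symmetrically amenable. Reading Definition \ref{def1} with $s=\delta_e$ (legitimate because $M(G)$ is unital, whence $as^2=a$), a bounded symmetric approximate diagonal satisfies in particular conditions (1) and (2), i.e.\ it is a bounded approximate diagonal in the sense of Johnson. Hence $M(G)$ is amenable. It then remains to feed amenability of $M(G)$ into the Dales--Ghahramani--Helemskii characterisation of amenable measure algebras, which asserts that $M(G)$ is amenable precisely when $G$ is discrete and amenable; this gives the desired conclusion.

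The genuine content, and the main obstacle, is entirely external: the Dales--Ghahramani--Helemskii theorem that amenability of $M(G)$ already forces $G$ to be discrete. Everything else is formal --- the reduction $M(G)=\ell^1(G)$ in the discrete case, the passage from symmetric amenability to amenability, and Johnson's equivalence for group algebras. Note that Theorem \ref{tt1} is not strictly required here, since it only relates symmetric amenability to $p$-amenability and to the existence of a symmetric virtual diagonal within the unital setting; the link to discreteness of $G$ must come from the measure-algebra amenability result.
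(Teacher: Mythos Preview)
Your argument is correct and follows essentially the same route as the paper: both directions rest on the Dales--Ghahramani--Helemskii characterisation of amenable measure algebras together with Johnson's equivalence of amenability and symmetric amenability for $L^1(G)$. The only cosmetic difference is that the paper routes the passage ``symmetric amenability $\Rightarrow$ amenability'' through Theorems~\ref{tt1} and~\ref{t1} to obtain a (symmetric) virtual diagonal, whereas you observe directly that a bounded symmetric approximate diagonal is in particular a bounded approximate diagonal; your shortcut is legitimate and, as you note, the internal theorems are not really needed here.
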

\begin{proof}
Let $M(G)$ be symmetrically amenable. By Theorem \ref{tt1}, $M(G)$ has a symmetric virtual diagonal and Theorem \ref{t1} implies that it has a bounded symmetric approximate diagonal. Thus, $M(G)$ is amenable, because of that it has a virtual diagonal \cite[Lemma 2.1]{joh}. This implies that $G$ is discrete and amenable \cite[Theorem 1.1]{dgh}.

Conversely, assume that $G$ is discrete and amenable. Then $M(G)=\ell^1(G)$ and $\ell^1(G)$ is amenable by Johnson Theorem. Now, apply Theorem 4.1 of \cite{joh1}.
\end{proof}

Let $\mathfrak{A}$ be a Banach algebra and $X$ be a Banach $\mathfrak{A}$-bimodule and  $\mathfrak{A}^\circ$-bimodule with defining module actions are defined in the previous section. Following Lau \cite{lau}, we define
$$Z(\mathfrak{A}, X^*)=\bigcap_{a\in\mathfrak{A}}\left\{f\in X^*:a\cdot f=f\cdot a\right\}.$$

The following result is the Theorem 1 of \cite{lau} in the sense of symmetric amenability that we give its proof where it is different from the proof of the mentioned Theorem.

\begin{theorem}
Let $\mathfrak{A}$ be a unital Banach algebra with unit $e_\mathfrak{A}$ and $X$ be a Banach $\mathfrak{A}$-bimodule.  Then the following statements are equivalent:
\begin{itemize}
  \item[(i)] $\mathfrak{A}$ is symmetrically amenable;
  \item[(ii)] For any Banach $\mathfrak{A}$-submodule $Y$ of $X$, each linear functional in $Z(\mathfrak{A}, Y^*)$  has an extension to a linear functional in $Z(\mathfrak{A}, X^*)$;
  \item[(iii)] There is a bounded projection from $X^*$ onto $Z(\mathfrak{A}, X^*)$ which commutes with any bounded linear operator from $X^*$ into $X^*$ commuting with the action of $\mathfrak{A}$ on $X$.
\end{itemize}
\end{theorem}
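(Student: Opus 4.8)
The plan is to run Lau's cyclic argument (i)$\Rightarrow$(ii)$\Rightarrow$(iii)$\Rightarrow$(i), replacing throughout the hypothesis of amenability by symmetric amenability and, via Theorem \ref{tt1}, by $p$-amenability, and replacing inner derivations by $p$-inner (respectively $p^\circ$-inner) derivations. Since the Banach-space mechanics of the extension and projection arguments coincide with those of \cite{lau}, I would spell out in full only the steps that genuinely use the symmetric/$p$-amenable structure, and refer to \cite{lau} for the purely formal remainder (this is the sense in which we "give the proof where it is different").

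For (i)$\Rightarrow$(ii): let $Y$ be a closed $\mathfrak{A}$-submodule of $X$ and $f\in Z(\mathfrak{A},Y^*)$. First extend $f$ by the Hahn--Banach theorem to some $g_0\in X^*$ with $g_0|_Y=f$. I would then form the derivation $D\colon\mathfrak{A}\to X^*$, $D(a)=a\cdot g_0-g_0\cdot a$, and check that it takes values in the annihilator $Y^{\perp}$: for $y\in Y$ the dual-action convention gives $\langle y,D(a)\rangle=\langle y\cdot a,g_0\rangle-\langle a\cdot y,g_0\rangle=\langle y,a\cdot f-f\cdot a\rangle=0$, since $y\cdot a,\,a\cdot y\in Y$ and $f$ is central on $Y$. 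As $Y^{\perp}\cong (X/Y)^*$ is the dual of the quotient bimodule and symmetric amenability yields $p$-amenability by Theorem \ref{tt1}, the derivation $D$ is $p$-inner: there is $t\in\mathfrak{A}\widehat{\otimes}(X/Y)^*$ with $D(a)=a\cdot p(t)-p(t)\cdot a$ and $h:=p(t)\in Y^{\perp}$. Setting $g=g_0-h$ gives $g|_Y=f$ and $a\cdot g=g\cdot a$ for all $a$, i.e. $g\in Z(\mathfrak{A},X^*)$ extends $f$. The departure from \cite{lau} is precisely the appeal to $p$-inner-ness in place of ordinary inner-ness; the identical computation carried out over $\mathfrak{A}^\circ$ using $p^\circ$-inner derivations handles the opposite action, so that $g$ is central for both module structures.

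For (ii)$\Rightarrow$(iii) I would construct the projection by extending, through the property in (ii), the identity map of $Z(\mathfrak{A},X^*)$ and verify the commutation with module maps by naturality; this step needs no symmetric input and follows \cite{lau} verbatim. For (iii)$\Rightarrow$(i) I would feed the projection a concrete module in order to manufacture a symmetric virtual diagonal: take $X=(\mathfrak{A}\widehat{\otimes}\mathfrak{A})^*$, so $X^*=(\mathfrak{A}\widehat{\otimes}\mathfrak{A})^{**}$, and apply the projection $P$ onto $Z(\mathfrak{A},X^*)$ to the canonical image of $e_\mathfrak{A}\otimes e_\mathfrak{A}$, putting $M=P(e_\mathfrak{A}\otimes e_\mathfrak{A})$. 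Centrality $a\cdot M=M\cdot a$ is immediate from $M\in Z(\mathfrak{A},X^*)$, and because $P$ commutes with the module map $\pi^{**}$ one gets $\pi^{**}(M)=e_\mathfrak{A}$; the conditions of Definition \ref{def1} then identify $M$ as a symmetric virtual diagonal, whence $\mathfrak{A}$ is symmetrically amenable by Theorems \ref{tt1} and \ref{t1}.

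I expect the genuine difficulty to lie in (iii)$\Rightarrow$(i), namely in securing the \emph{opposite-algebra} relations $a\circ M=M\circ a$ and $\pi^{\circ **}(M)=e_\mathfrak{A}$ and the flip-symmetry of $M$, which are the new content relative to Lau's one-sided result. The obstruction is that condition (iii) only guarantees that $P$ commutes with operators commuting with the $\mathfrak{A}$-action, whereas the flip map on $\mathfrak{A}\widehat{\otimes}\mathfrak{A}$ does \emph{not} commute with that action but instead intertwines it with the $\mathfrak{A}^\circ$-action. Reconciling these is the crux: one must either symmetrize $M$ by hand, replacing it by $\tfrac12(M+M^\circ)$ and checking this stays in $Z(\mathfrak{A},X^*)$ with unchanged $\pi^{**}$-value, or exploit the simultaneous $\mathfrak{A}$- and $\mathfrak{A}^\circ$-bimodule structure declared before the theorem so that the relevant symmetrizing operators fall within the scope of (iii). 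Verifying that these operators are covered by (iii), and that the resulting commutations combine to give the full set of symmetric diagonal identities rather than merely the one-sided amenability identities, is the step I would carry out with the most care.
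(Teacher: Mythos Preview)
Your overall architecture matches the paper: run Lau's cycle, invoke Theorem~\ref{tt1} to trade symmetric amenability for $p$-amenability, and in (iii)$\Rightarrow$(i) build a symmetric virtual diagonal from the projection applied to $e_\mathfrak{A}\otimes e_\mathfrak{A}$. For (i)$\Rightarrow$(ii) and (ii)$\Rightarrow$(iii) you are essentially doing what the paper does (the paper also runs the $\mathfrak{A}^\circ$-derivation $D_2$ and subtracts \emph{both} correctors $\beta,\lambda$ from the Hahn--Banach extension).

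The gap is in (iii)$\Rightarrow$(i), exactly where you flag it. Your first suggestion, replacing $M$ by $\tfrac12(M+M^\circ)$, does not work: from $a\cdot M=M\cdot a$ one obtains only $a\circ M^\circ=M^\circ\circ a$, not $a\cdot M^\circ=M^\circ\cdot a$, so the symmetrized element need not stay in $Z(\mathfrak{A},X^*)$. Also, ``$P$ commutes with $\pi^{**}$'' is not well-posed, since $\pi^{**}$ does not map $X^*$ to itself; the correct mechanism is that $\pi^*(f)$ (and its flipped/opposite analogues) already lie in $Z(\mathfrak{A},X^*)$, so $P$ fixes them.

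Your second suggestion is the right instinct, and the paper makes it precise as follows. Take $X=\mathfrak{A}\widehat{\otimes}\mathfrak{A}$ (not its dual), and observe that the \emph{opposite} actions $L_a(b\otimes c)=b\otimes ac$ and $R_a(b\otimes c)=ba\otimes c$ are bounded operators on $X$ that \emph{commute} with the ordinary $\mathfrak{A}$-bimodule action (they act on the ``other'' tensor slot). Hence $P$ commutes with $L_a^*,R_a^*$ by (iii). Introduce the flip $q$ on $X^*$, $\langle a\otimes b,q(f)\rangle=\langle b\otimes a,f\rangle$, and set $M=q^*P^*(e_\mathfrak{A}\otimes e_\mathfrak{A})\in X^{**}$. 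The identities $q(x^*\cdot a)=L_a^*q(x^*)$, $q(a\cdot x^*)=R_a^*q(x^*)$, $q(a\circ x^*)=a\cdot q(x^*)$, $q(x^*\circ a)=q(x^*)\cdot a$ convert the commutation $PL_a^*=L_a^*P$, $PR_a^*=R_a^*P$ and the centrality of $Pq(x^*)$ into \emph{both} $a\cdot M=M\cdot a$ and $a\circ M=M\circ a$, while the fact that $q\pi^{\circ*}(f\circ a)$ is already central gives $a\circ\pi^{\circ**}(M)=a$. This is the missing concrete step that turns your plan into a proof.
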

\begin{proof}
(i)$\longrightarrow$(ii) The quotient Banach space $X/Y$ is a Banach $\mathfrak{A}$-bimodule   by the following actions
\begin{equation}\label{1}
    a\cdot (x+Y)=a\cdot x+Y, \hspace{1cm}(x+Y)\cdot a=x\cdot a+Y,
\end{equation}
for every $a\in\mathfrak{A}$ and $x\in X$. Let  $\alpha\in Z(\mathfrak{A}, Y^*)$. Suppose that $\overline{\alpha}\in X^*$ is an extension of $\alpha$.  Then for every $a\in\mathfrak{A}$ and $y\in Y$ we have
\begin{eqnarray}
\nonumber
 \langle y, a\cdot\overline{\alpha}\rangle-\langle y,\overline{\alpha}\cdot a\rangle  &=& \langle y\cdot a, \overline{\alpha}\rangle-\langle a\cdot y, \overline{\alpha}\rangle\\
   \nonumber
   &=&  \langle y,a\cdot  \overline{\alpha}- \overline{\alpha}\cdot a\rangle=0.
\end{eqnarray}

This means that  $a\cdot  \overline{\alpha}- \overline{\alpha}\cdot a\in Y^\bot=\{x^*\in X^*|~ \langle y, x^*\rangle=0,~ \emph{\emph{for}}~ \emph{\emph{every}}~ y\in Y\}$. Clearly, $\mathbf{q}:Y^\bot\longrightarrow (X/Y)^*$ is an $\mathfrak{A}$-module  isometry and surjective. Define $D_1:\mathfrak{A}\longrightarrow (X/Y)^*$ by $D_1(a)=\mathbf{q}(a\cdot \overline{\alpha}- \overline{\alpha}\cdot a)$ for every $a\in \mathfrak{A}$. It is clear that $D_1$ is a bounded derivation.

Symmetric amenability of $\mathfrak{A}$ implies that it is $p$-amenable by Theorem \ref{t1}. Thus, there is a $t\in \mathfrak{A}\widehat{\otimes}(X/Y)^*$ such that $D_1(a)=a\cdot p^\circ(t)-p^\circ(t)\cdot a$ for every $a\in \mathfrak{A}^\circ$. Then there exists $\beta\in Y^\bot$ ($\mathbf{q}$ is surjective) such that $D_1(a)=a\cdot\mathbf{q}(\beta)-\mathbf{q}(\beta)\cdot a$, for all $a\in \mathfrak{A}$. Similarly, by defining $D_2:\mathfrak{A}^\circ\longrightarrow (X/Y)^*$ by $D_2(a)=\mathbf{q}(a\cdot  \overline{\alpha}- \overline{\alpha}\cdot a)$
for every $a\in \mathfrak{A}$,   one can find a $\lambda\in Y^\bot$
such that $D_2(a)=a\cdot\mathbf{q}(\lambda)-\mathbf{q}(\lambda)\cdot a$, for all $a\in \mathfrak{A}$.
 Now, set $\gamma=\overline{\alpha}-\beta-\lambda$. Then
\begin{equation}\label{}
 \nonumber
 \langle y,a\cdot (\overline{\alpha}-\beta-\lambda)-(\overline{\alpha}-\beta-\lambda)\cdot a\rangle=0,
 \end{equation}
for every $a\in\mathfrak{A}$ and $y\in Y$.
This means that $\gamma\in  Z(\mathfrak{A}, X^*)$.
Therefore (i) implies (ii).

(ii)$\longrightarrow$(iii) This case is exactly similar to the (ii)$\longrightarrow$(iii) of \cite[Theorem 1]{lau}.

(iii)$\longrightarrow$(i) Set $X=\mathfrak{A}\widehat{\otimes}\mathfrak{A}$, which $X$ is a Banach $\mathfrak{A}$-bimodule and $\mathfrak{A}^\circ$-bimodule by the following actions
\begin{equation}\label{5}
    a\cdot(b\otimes c)=ab\otimes c,\hspace{1cm}(b\otimes c)\cdot a=b\otimes ca,
\end{equation}
and
\begin{equation}\label{55}
    a\circ(b\otimes c)=b\otimes ac\hspace{1cm}(b\otimes c)\circ a=ba\otimes c,
\end{equation}
for all $a,b,c\in\mathfrak{A}$. Let $\mathcal{F}=\{R_a~|~a\in\mathfrak{A}\}\cup \{L_a~|~a\in\mathfrak{A}\}$ be a family of bounded linear operators from $X$ into $X$, such that
\begin{equation}\label{5}
    L_a(b\otimes c)=b\otimes ac\hspace{1cm}\emph{\emph{and}}\hspace{1cm}R_a(b\otimes c)=ba\otimes c,
\end{equation}
for all $a,b,c\in\mathfrak{A}$. Then
  every member of $\mathcal{F}$ commutes with the actions of $\mathfrak{A}$ on $X$. So, by (iii), there exists a bounded surjective projection $P:X^*\longrightarrow X^*$ such that $PT^*=T^*P$ for all $T\in\mathcal{F}$. Define $q:X^*\longrightarrow X^*$ by $\langle a\otimes b, q(f)\rangle=\langle b\otimes a, f\rangle$ for all  $a,b\in\mathfrak{A}$ and all $f\in X^*$. Set $M=q^*(P^*(e_\mathfrak{A}\otimes e_\mathfrak{A}))$. We claim that $M$ is a bounded symmetric virtual diagonal. According to definition of $q$ we have
\begin{eqnarray}\label{6}
 \nonumber
  \langle c\otimes d, q(x^*\cdot a)\rangle &=& \langle d\otimes c, x^*\cdot a\rangle=\langle ad\otimes c, x^*\rangle =\langle c\otimes ad, q(x^*)\rangle \\
     &=& \langle L_a(c\otimes d), q(x^*)\rangle =\langle c\otimes d, L_a^*q(x^*)\rangle,
\end{eqnarray}
\begin{eqnarray}\label{7}
 \nonumber
  \langle c\otimes d, q(a\cdot x^*)\rangle &=& \langle d\otimes c, a\cdot x^*\rangle=\langle  d\otimes ca, x^*\rangle =\langle ca\otimes d, q(x^*)\rangle \\
      &=& \langle R_a(c\otimes d), q(x^*)\rangle =\langle c\otimes d, R_a^*q(x^*)\rangle,
\end{eqnarray}
\begin{eqnarray}\label{66}
 \nonumber
  \langle c\otimes d, q(x^*\circ a)\rangle &=& \langle d\otimes c, x^*\circ a\rangle=\langle a\circ (d\otimes c), x^*\rangle =\langle ac\otimes d, q(x^*)\rangle \\
     &=& \langle a\cdot(c\otimes d), q(x^*)\rangle =\langle c\otimes d, q(x^*)\cdot a\rangle,
\end{eqnarray}
and
\begin{eqnarray}\label{77}
 \nonumber
  \langle c\otimes d, q(a\circ x^*)\rangle &=& \langle d\otimes c, a\circ x^*\rangle=\langle  (d\otimes c)\circ a, x^*\rangle =\langle c\otimes da, q(x^*)\rangle \\
      &=& \langle (c\otimes d)\cdot a, q(x^*)\rangle =\langle c\otimes d, a\cdot q(x^*)\rangle,
\end{eqnarray}
for all $a,b,c,d\in\mathfrak{A}$ and  $x^*\in X^*$. Then by properties of $P$ and relations (\ref{6}) and (\ref{7}),  we have
\begin{equation}\label{eqf1}
   \langle x^*, M\cdot a\rangle=\langle  Pq(x^*)\cdot a,e_\mathfrak{A}\otimes e_\mathfrak{A}\rangle\ \text{and}\ \langle x^*,a\cdot M\rangle =\langle a\cdot Pq(x^*),e_\mathfrak{A}\otimes e_\mathfrak{A}\rangle,
\end{equation}
for all $x^*\in X^*$. Since $Pq(x^*)$ is in $Z(\mathfrak{A},X^*)$, \eqref{eqf1} implies that $a\cdot M=M\cdot a$ for every $a\in\mathfrak{A}$. Similar to \cite{lau}, we have $a\cdot \pi^{**}(M)=a$ for every $a\in\mathfrak{A}$. Regarding as $P$ is an $\mathfrak{A}$-module morphism and relations \eqref{66} and  \eqref{77}, we have
\begin{eqnarray}\label{8}
\nonumber
 \langle x^*, M\circ a\rangle  &=& \langle a\circ x^*, q^*(P^*(e_\mathfrak{A}\otimes e_\mathfrak{A}))\rangle=\langle q(a\circ x^*), P^*(e_\mathfrak{A}\otimes e_\mathfrak{A})\rangle \\
 \nonumber
     &=&  \langle a\cdot q(x^*), P^*(e_\mathfrak{A}\otimes e_\mathfrak{A})\rangle= \langle P(a\cdot q(x^*)),e_\mathfrak{A}\otimes e_\mathfrak{A}\rangle\\
        &=&  \langle a\cdot Pq(x^*),e_\mathfrak{A}\otimes e_\mathfrak{A}\rangle
\end{eqnarray}
and
\begin{eqnarray}\label{9}
\nonumber
 \langle x^*,a\circ M\rangle  &=& \langle  x^*\circ a, q^*(P^*(e_\mathfrak{A}\otimes e_\mathfrak{A}))\rangle=\langle q(x^*\circ a), P^*(e_\mathfrak{A}\otimes e_\mathfrak{A})\rangle \\
 \nonumber
     &=&  \langle q(x^*)\cdot a, P^*(e_\mathfrak{A}\otimes e_\mathfrak{A})\rangle= \langle P(q(x^*)\cdot a),e_\mathfrak{A}\otimes e_\mathfrak{A}\rangle\\
       &=&  \langle Pq(x^*)\cdot a,e_\mathfrak{A}\otimes e_\mathfrak{A}\rangle
\end{eqnarray}
for all $x^*\in X^*$.
Therefore (\ref{8}) and (\ref{9}) imply that $a\circ M=M\circ a$ for every $a\in\mathfrak{A}$. Moreover,
\begin{eqnarray*}
   \langle f, a\circ\pi^{\circ**}M\rangle&=&  \langle f\circ a, \pi^{\circ**}M\rangle=\langle q(\pi^{\circ*}(f\circ a)),P^*(e_\mathfrak{A}\otimes e_\mathfrak{A})\rangle\\
   &=&  \langle P(q(\pi^{\circ*}(f\circ a))), e_\mathfrak{A}\otimes e_\mathfrak{A}\rangle=\langle q(\pi^{\circ*}(f\circ a)), e_\mathfrak{A}\otimes e_\mathfrak{A}\rangle\\
   &=&  \langle \pi^{\circ*}(f\circ a), e_\mathfrak{A}\otimes e_\mathfrak{A}\rangle=\langle f\circ a, \pi^{\circ**}(e_\mathfrak{A}\otimes e_\mathfrak{A})\rangle\\
   &=&\langle a,f\rangle,
\end{eqnarray*}
for every $a\in\mathfrak{A}$. Therefore, $a\circ\pi^{\circ**}M=a$ for every $a\in\mathfrak{A}$. Thus $\mathfrak{A}$ has a symmetric virtual diagonal and consequently, Theorem \ref{t1} implies that $\mathfrak{A}$ is symmetrically amenable.
\end{proof}


\end{document}